\newtheorem{thm}{Theorem}[section]
\newtheorem{lem}[thm]{Lemma}
\newtheorem{rem}[thm]{Remark}
\newtheorem{defn}[thm]{Definition}
\def\eqnarray{\stepcounter{equation}\let\@currentlabel=\theequation
\global\@eqnswtrue
\tabskip\@centering\let\\=\@eqncr
$$\halign to \displaywidth\bgroup\hfil\global\@eqcnt\z@
 $\displaystyle\tabskip\z@{##}$&\global\@eqcnt\@ne
 \hfil$\displaystyle{{}##{}}$\hfil
 &\global\@eqcnt\tw@ $\displaystyle{##}$\hfil
 \tabskip\@centering&\llap{##}\tabskip\z@\cr}
\def\endeqnarray{\@@eqncr\egroup
 \global\advance\c@equation\m@ne$$\global\@ignoretrue}
\def\@yeqncr{\@ifnextchar [{\@xeqncr}{\@xeqncr[5pt]}}
\begin{document}

\renewcommand{\PaperNumber}{***}

\FirstPageHeading

\ShortArticleName{Localization of zeros of polar polynomials on the unit disc
}

\ArticleName{Localization of zeros of polar polynomials  on the unit disc
}

% Names of the authors for the title of the paper
\Author{R. S.~Costas-Santos\,$^\dag\!\!\ $
and A. Rehouma\,$^\ddag{}$}
\AuthorNameForHeading{R. S. Costas-Santos  and A. Rehouma
}
\Address{$^\dag$ Departamento de M\'etodos Cuantitativos, 
Universidad Loyola Andaluc\'ia, 
E-41704, Dos Hermanas, Seville, Spain.
%Address of First Author, Country
} % Address of First Author
\URLaddressD{\href{http://www.rscosan.com}
{http://www.rscosan.com}}
\EmailD{rscosa@gmail.com} % E-mail address of First Author

\Address{$^\ddag$ Department of Mathematics, Faculty of exact sciences, 
University of Hama Lakhdar of Eloued, Algeria.
%Address of First Author, Country
} % Address of First Author
\URLaddressD{\href{https://sites.google.com/view/mathsrehoumablog/accueil}{https://sites.google.com/view/mathsrehoumablog/accueil}}
\EmailD{rehouma-abdelhamid@univ-eloued.dz} % E-mail address of First Author

\ArticleDates{Received ?? \today \, in final form ????; Published online ????}

\Abstract{
%%l8 dropped “In the paper” added “a” adjusted punctuation
We derive a useful result about the zeros of the $k$-polar 
polynomials on the unit circle; in particular we obtain a ring shaped 
region containing all the zeros of these polynomials. 
Some examples are presented. 
}
%``Symmetry, Integrability and Geometry: Methods and Applications''.}
\Keywords{
orthogonal polynomials; 
polar polynomials; 
Laurent series;
unit circle;
Asymptotic behaviour;
Location of zeros}
%Please type here List of Keywords for your article separated by semicolon.
% Keywords required only for MST, PB, PMB, PM, JOA, JOB?
% Keywords:

\Classification{05C38; 15A15; 05A15; 15A18}
%{??????} % e.g. 35A30; 81Q05
%For 2010 Mathematics Subject Classification see
%http://www.ams.org/mathscinet/msc/msc2010.html

%%%%%%%%%%%%%%%%%%%%%%%%%%%%%%%%%%%%%
\section{Introduction}
%%%%%%%%%%%%%%%%%%%%%%%%%%%%%%%%%%%%%
Finding the roots of polynomials is a problem of interest in both mathematics
and in areas of application such as physical systems, which can be reduced 
to solving certain equations. % equation -> equations
There are very interesting geometric relationships between the roots of a 
polynomial $f_n (z) $ and those of $f_n' (z)$. The most important result 
is the following.
\begin{thm}[The Gau{\ss}-Lucas theorem \cite{ZBMATH2717048}] \label{thm:1.1}
Let $f_n(z)\in \mathbb C[z]$ be a polynomial of degree at least one. 
All zeros of  $f_n'(z)$ lie in the convex hull of the zeros of  the zeros of $f_n(z)$.
\end{thm}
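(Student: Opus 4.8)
The plan is to reduce the statement to an explicit convex-combination identity extracted from the logarithmic derivative of $f_n$. First I would write the polynomial in factored form $f_n(z)=c\prod_{k=1}^n(z-z_k)$, where $c\neq 0$ is the leading coefficient and $z_1,\dots,z_n$ are the zeros of $f_n$ listed with multiplicity. Differentiating and dividing by $f_n$ gives the partial-fraction identity
\[
\frac{f_n'(z)}{f_n(z)}=\sum_{k=1}^n\frac{1}{z-z_k},
\]
valid at every point $z$ that is not a zero of $f_n$.

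Next I would take an arbitrary zero $w$ of $f_n'$. If $w$ happens to be a zero of $f_n$ as well, then $w\in\{z_1,\dots,z_n\}$, so it lies trivially in the convex hull and there is nothing to prove; hence I may assume $f_n(w)\neq 0$. In that case the identity above forces $\sum_{k=1}^n (w-z_k)^{-1}=0$. The key algebraic step is to clear the complex denominators by writing $(w-z_k)^{-1}=\overline{(w-z_k)}\,/\,|w-z_k|^2$; taking the complex conjugate of the resulting equation yields
\[
\sum_{k=1}^n \lambda_k\,(w-z_k)=0,\qquad \lambda_k:=\frac{1}{|w-z_k|^2}>0 .
\]

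Solving for $w$ then gives $w=\bigl(\sum_k\lambda_k z_k\bigr)\big/\bigl(\sum_k\lambda_k\bigr)$. Setting $\mu_k:=\lambda_k\big/\sum_{j}\lambda_j$, the weights satisfy $\mu_k>0$ and $\sum_k\mu_k=1$, so $w=\sum_{k=1}^n\mu_k z_k$ is a genuine convex combination of the zeros of $f_n$, which places $w$ in their convex hull and completes the argument. I do not anticipate a serious obstacle here: the only point requiring care is the passage from the vanishing sum of reciprocals to a convex combination, which hinges on recognizing that multiplying through by $|w-z_k|^2$ converts the reciprocals into \emph{positive} weights. The presence of repeated zeros causes no difficulty, since repetition leaves the convex hull unchanged.
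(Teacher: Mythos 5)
Your argument is correct and complete: the logarithmic-derivative identity, the reduction to the case $f_n(w)\neq 0$, and the conversion of $\sum_k (w-z_k)^{-1}=0$ into a convex combination via the positive weights $\lambda_k=|w-z_k|^{-2}$ is exactly the classical proof of the Gau{\ss}--Lucas theorem. Note that the paper itself offers no proof of this statement --- it is quoted as background with a citation to Lucas --- so there is nothing to compare against; your write-up would in fact supply the missing argument, and it correctly handles the edge cases (multiple zeros, and $w$ a common zero of $f_n$ and $f_n'$).
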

The location of zeros, or critical points, of polynomials has many physical 
and geometrical interpretations. For example, C. F. Gau{\ss} in 1816 showed 
that the roots of $f_n' (z) $ are the positions of equilibrium in the field of 
force due to equal particles situated at each root of $f_n(z)$, if each 
particle repels with a force equal to the inverse distance being the inverse 
distance law.

Many results exist concerning the location of the zeros of a polynomial of a 
complex variable as a function of the coefficients of the polynomial. 
One is the well-known Enstrom-Kakeya theorem \cite{MR2105088}. 
Another one, useful to obtain more precise information about the zeros 
of a polynomial, was obtained by J. H. Grace.
\begin{thm}[The Grace's theorem \cite{ZBMATH2659180}]\label{thm:1.2}
Let $a(z)$ and $b(z)$ be the polynomials  
\[
a(z)=\sum_{\ell=0}^n a_\ell {n \choose \ell} z^\ell,\quad 
b(z)=\sum_{\ell=0}^n b_\ell {n \choose \ell} z^\ell. 
\]
If the zeros of both polynomials lie in the unit disk, then the zeros of the 
``composition'' of the two
\[
c(z)=\sum_{\ell=0}^n a_\ell b_\ell {n \choose \ell} z^\ell, 
\]
also lie in the unit disk.
\end{thm}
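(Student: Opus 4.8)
The plan is to prove Grace's theorem (Theorem~\ref{thm:1.2}) by reducing it to the apolarity theory developed by Grace himself, whose centerpiece is the classical \emph{apolarity criterion}. First I would introduce the \emph{apolar bilinear form} acting on pairs of polynomials written in the scaled (binomial) basis. Given $a(z)=\sum_{\ell=0}^n a_\ell\binom{n}{\ell}z^\ell$ and $b(z)=\sum_{\ell=0}^n b_\ell\binom{n}{\ell}z^\ell$, the two are said to be \emph{apolar} when the alternating sum $\sum_{\ell=0}^n(-1)^\ell\binom{n}{\ell}a_\ell\,b_{n-\ell}$ vanishes. Grace's \emph{Apolarity Theorem} asserts that if $a$ and $b$ are apolar, then every circular region (a disc, a half-plane, or the exterior of a disc) that contains all the zeros of one of the two polynomials must contain at least one zero of the other. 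I would take this apolarity criterion as the fundamental lemma underlying the entire argument.

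Next I would connect the composition polynomial $c(z)=\sum_{\ell=0}^n a_\ell b_\ell\binom{n}{\ell}z^\ell$ to apolarity. The key algebraic observation is that evaluating whether a fixed point $w$ is a zero of $c$ is equivalent to an apolarity statement: one rewrites $c(w)=\sum_\ell a_\ell\,(b_\ell w^\ell)\binom{n}{\ell}$ and recognizes $\sum_\ell b_\ell w^\ell\binom{n}{\ell}z^\ell$, suitably reflected, as a polynomial built from $b$ and the parameter $w$. Concretely, for any $w$ the polynomial $B_w(z):=\sum_{\ell=0}^n b_\ell\binom{n}{\ell}(-w)^{\,n-\ell}z^{\ell}$ has the feature that the apolarity condition between $a$ and $B_w$ is precisely the equation $c(w)=0$ up to a nonzero scalar. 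Thus $w$ is a root of $c$ if and only if $a$ and $B_w$ are apolar.

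With this dictionary in place, the argument proceeds by contradiction. Suppose $w$ is a zero of $c(z)$ lying \emph{outside} the closed unit disc, so $|w|>1$. By the previous step, $a$ and $B_w$ are apolar. The zeros of $B_w$ are exactly the images $z=-w/\zeta$ (equivalently, the reciprocal-and-scaled transforms) of the zeros $\zeta$ of $b$; since all zeros of $b$ satisfy $|\zeta|\le 1$ and $|w|>1$, every zero of $B_w$ has modulus strictly greater than $1$. Hence the closed unit disc—which by hypothesis contains all zeros of $a$—contains \emph{no} zero of $B_w$. This directly contradicts the Apolarity Theorem, which forces the closed unit disc (a circular region containing all zeros of $a$) to contain at least one zero of the apolar partner $B_w$. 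Therefore no such $w$ exists, and all zeros of $c$ lie in the unit disc.

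The main obstacle I anticipate is twofold. First, the \emph{bookkeeping of the apolarity transform}: pinning down the exact reflection/scaling $B_w$ so that the apolarity pairing reproduces $c(w)$ with the correct binomial and sign factors is delicate, and getting the $(-1)^\ell$ and $\binom{n}{\ell}$ normalizations consistent with Grace's convention is where errors creep in. Second, and more conceptually, the whole reduction rests on the Apolarity Theorem itself, whose proof is nontrivial—it typically requires showing that a polynomial with all zeros in a circular region is, via repeated application of the fact that the derivative's zeros lie in the convex hull (Theorem~\ref{thm:1.1}, Gau\ss--Lucas) together with Möbius invariance of circular regions, ``reachable'' by a chain of apolar operations. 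If the Apolarity Theorem may be cited as known, the remaining work is the algebraic identification in the second paragraph; if not, establishing it is the true crux and would itself lean on Gau\ss--Lucas applied to the polar derivative.
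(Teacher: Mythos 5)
The paper itself offers no proof of Theorem~\ref{thm:1.2}: it is quoted as a classical result with only a citation to Grace's 1902 paper, so there is no internal argument to compare yours against. Taken on its own terms, your reduction to Grace's Apolarity Theorem is the standard and correct route (it is essentially Szeg\H{o}'s derivation of the composition theorem; the ``Szeg\H{o}'s theorem'' quoted in Section~2 of the paper is the circular-region generalization obtained the same way). Two points need repair before the argument is airtight. First, your displayed $B_w$ carries the wrong index: with the apolarity form $\sum_{\ell}(-1)^\ell{n\choose \ell}\alpha_\ell\beta_{n-\ell}$ you need $\beta_{n-\ell}=(-1)^\ell b_\ell w^\ell$, i.e.
\[
B_w(z)=\sum_{j=0}^n {n\choose j}\, b_{n-j}(-w)^{n-j}z^{j}=z^n\, b(-w/z),
\]
whereas the formula you wrote, $\sum_\ell b_\ell{n\choose \ell}(-w)^{n-\ell}z^\ell=(-w)^n b(-z/w)$, has zeros $z=-w\zeta$ and produces the pairing $\sum_\ell{n\choose\ell}a_\ell b_{n-\ell}w^\ell$ rather than $c(w)$. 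Your subsequent prose (zeros at $z=-w/\zeta$, hence of modulus $\ge|w|>1$) matches the corrected formula, so this is the bookkeeping slip you yourself predicted rather than a conceptual error, but as written the displayed $B_w$ would break the proof: for it, a zero $\zeta$ of $b$ near $0$ gives a zero of $B_w$ inside the unit disk and no contradiction arises. Second, if $b(0)=0$ (some $\zeta=0$) then the corrected $B_w$ drops degree, and you need the usual convention that the missing zeros sit at infinity, outside $\overline{D(0,1)}$, so that the Apolarity Theorem (stated for circular regions and degree-$n$ forms) still applies; similarly one should assume $a$ has exact degree $n$ so that ``all zeros of $a$ lie in the unit disk'' is meaningful for the degree-$n$ pairing. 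With those two repairs the proof is complete, modulo the Apolarity Theorem itself, which you are entitled to cite since it is Grace's own result and the composition theorem is classically deduced from it exactly as you describe.
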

The zeros of orthogonal polynomials has been an intensively studied 
subject since the beginning of the twentieth century, and several 
breakthroughs have been made in the recent years. 
Barry Simon  \cite{MR2105088} proved 
that if  $\mu$ is a finite positive measure defined on the Borelian 
$\sigma$-algebra of $\mathbb C$, $\mu$ is absolutely continuous with 
respect to the Lebesgue measure $d\theta/(2\pi)$ on $[-\pi,\pi]$; and 
$(L_n(z))$ is the system of monic orthogonal polynomials with 
respect to $\mu$, i.e.,
\begin{eqnarray} 
\label{eq:1}
\int_{-\pi}^{\pi} L_n(z) z^{-j}\, d\mu(\theta)&=&0,\quad j=0, 1, ..., n-1,\\
\dfrac{1}{2\pi} \int_{-\pi}^{\pi} L_n(z)\overline{L_n(z)}\, d\mu(\theta)&=&
\|L_n\|^2\ne 0,\quad n=0, 1, ...,\nonumber
\end{eqnarray}
where $z=\exp(i\theta)$ and $d\mu(\theta)=\rho(\theta)d\theta$ for  
$\rho\in L^1([-\pi,\pi],d\theta)$  a measure supported on the 
unit circle 
$\mathbb T=\{z\in \mathbb C: |z|=1\}$. 

Then, all the zeros of $L_n(z)$ are contained in the 
unit closed disk $\overline{D(0,1)}=\{z : |z| \le 1\}=:\mathbb D$.

We are going to introduce monic $k$-polar polynomials.
\begin{defn} \label{def:1.3} Let $\mu$ be a finite measure defined on the Borelian 
$\sigma$-algebra of $\mathbb C$ such that it contains an infinite 
number of points and let $(L_n(z))$ the system of monic orthogonal 
polynomials with respect to $\mu$.
Let $\xi$ be a fixed complex number. Let $k$ be a positive integer.
The $k$-polar polynomial related to $\mu$, which will be denoted 
by $Q_{n;k}(z;\xi)$,  is the polynomial solution of degree $n$ of 
the $k$-th order linear differential equation 
\[
\dfrac{d^k}{d z^k} (z-\xi)^k P(z)=(n+1)\cdots (n+k) L_n(z).
\]
\end{defn}
\begin{rem}
By construction,  $Q_{n;0}(z)=L_n(z)$ for all $n$.
\end{rem}
In the last years some attention has been paid to the so-called polar orthogonal 
polynomials. Fandora and Pijeira \cite{MR1882659} have studied $1$-polar orthogonal 
polynomial sequences associated with a measure supported on the segment. 
A similar study has been done by Pijeira and Urbina \cite{MR2764236}, in the 
case of $1$-polar Legendre polynomials. 

Our main purpose is to study the location of the zeros of $k$-polar orthogonal 
polynomials on the unit circle, in short OPUC, with respect to a generic measure $\mu$. 

In Section 2
we present some preliminaries and basic results we need to obtain the main result.
In Section 3 we state the main result of this work as well we study the location 
of zeros of three interesting examples of $k$-polar orthogonal polynomials 
on the unit circle. 
Since extensive calculations indicate that these polynomials often have 
complex zeros  and there exist a ring shaped region containing all the 
zeros of polar orthogonal polynoials we present in Section 4 numerical 
calculations to see if Sendov's conjecture \cite[p. 267]{MR700266} holds 
true or  not for such examples. 
%%%%%%%%%%%%%%%%%%%%%%%%%%%%%%%%%%%%%
\section{Preliminaries}
%%%%%%%%%%%%%%%%%%%%%%%%%%%%%%%%%%%%%
Given a complex number $z_0\in \mathbb C$ and a radius $r>0$, 
we define the open disk
\[
D(z_0,r):=\{z\in \mathbb C: |z-z_0|<r\},
\]
the closed disk 
\[
\overline{D(z_0,r)}:=\{z\in \mathbb C: |z-z_0|\le r\},
\]
and the circle 
\[
\partial D(z_0,r):=\{z\in \mathbb C: |z-z_0|=r\}.
\]
Let $\mu$ be a finite positive Borel measure that is 
absolutely continuous with respect to the Lebesgue 
measure $d\theta/(2\pi)$ on $[-\pi,\pi]$.
Let $(L_n(z))$ be the system of monic polynomials 
orthogonal with respect to $\mu$. It is known that 
the following mutually 
equivalent recursions relations hold for these polynomials:
\begin{eqnarray*}
L_n(z)&=&zL_{n-1}(z)+L_n(0)L_{n-1}^*(z),\\
L^*_n(z)&=&L^*_{n-1}(z)+z\overline{L_n(0)}L_{n-1}(z),
\end{eqnarray*}
where 
\begin{equation} \label{eq:2}
L_n^*(z)=z^n \overline{L_n(1/\overline z)}=
1+z\sum_{\ell=0}^{n-1} \overline{L_{\ell+1}(0)}L_\ell(z), \quad z\ne 0.
\end{equation}
For $|z|=1$, we have
\begin{equation} \label{eq:3}
\left|\dfrac{L^*_{n+1}(z)}{L_n^*(z)}-1\right|=
\left|\dfrac{L_{n+1}(z)}{L_n(z)}-z\right|=|L_{n+1}(0)|,\quad n=0, 1,...
\end{equation}
For more details about these former identities see 
\cite{MR2378544, MR1018677, MR2105088,ZBMATH3477793,MR1544526}.

Along this work we are going to deal with the zeros 
of polynomials with complex coefficients, therefore it is 
convenient to state the following results.
\begin{thm}[Szeg{\H{o}}'s theorem \cite{MR1367960,MR1544526}]
Let $a(z)$, $b(z)$ and $c(z)$ the polynomials defined in Theorem \ref{thm:1.2}.
If all the zeros of $a(z)$ lie in a closed disk $\overline D$ and 
$\lambda_1$, ..., $\lambda_n$ are the zeros of $b(z)$, then every zero of 
$c(z)$ has the form $\lambda_\ell \gamma_\ell$, where $\gamma_\ell\in \overline D$.
\end{thm}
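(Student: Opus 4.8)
The plan is to reduce the statement to Grace's theorem in its apolarity formulation (classically equivalent to Theorem \ref{thm:1.2}). Recall that two polynomials $A(z)=\sum_{\ell=0}^n A_\ell {n\choose \ell}z^\ell$ and $B(z)=\sum_{\ell=0}^n B_\ell {n\choose \ell}z^\ell$ are \emph{apolar} when
\[
\sum_{k=0}^n (-1)^k {n\choose k} A_k B_{n-k}=0,
\]
and that this form of Grace's theorem asserts: if $A$ and $B$ are apolar, then every closed disk (more generally, every circular region) containing all the zeros of $A$ must contain at least one zero of $B$. The idea is to fix an arbitrary zero $\gamma$ of $c(z)$ and build, out of $b$ and $\gamma$, a companion polynomial that is apolar to $a$; Grace's theorem applied to $\overline D$ then forces a zero of that companion into $\overline D$, and reading it off yields the factorization $\gamma=\lambda_\ell\gamma_\ell$.

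First I would dispose of the zero $\gamma=0$: here $c(0)=a_0b_0=0$, so either $0$ is a zero of $a$, whence $0\in\overline D$ and $\gamma=\lambda_\ell\cdot 0$ with $\gamma_\ell=0\in\overline D$, or $0$ is a zero of $b$, say $\lambda_\ell=0$, and $\gamma=\lambda_\ell\gamma_\ell$ holds trivially. For a nonzero zero $\gamma$, I would introduce the companion polynomial
\[
B^\ast(z):=z^n\,b(-\gamma/z)=\sum_{k=0}^n {n\choose k}(-1)^k b_k\gamma^k z^{\,n-k},
\]
whose binomially normalized coefficients are $\beta_{n-k}=(-1)^k b_k\gamma^k$. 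A direct computation then gives
\[
\sum_{k=0}^n (-1)^k {n\choose k} a_k\beta_{n-k}=\sum_{k=0}^n {n\choose k} a_kb_k\gamma^k=c(\gamma)=0,
\]
so $a$ and $B^\ast$ are apolar; moreover, for $z\neq 0$ one has $B^\ast(z)=0$ iff $b(-\gamma/z)=0$, so the zeros of $B^\ast$ are exactly the points $-\gamma/\lambda_\ell$ with $\lambda_\ell\neq 0$.

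Next I would invoke Grace's theorem: since all zeros of $a$ lie in $\overline D$ and $a$ is apolar to $B^\ast$, the disk $\overline D$ contains at least one zero of $B^\ast$, say $-\gamma/\lambda_\ell\in\overline D$. Putting $\gamma_\ell:=-\gamma/\lambda_\ell$ gives $\gamma=\lambda_\ell\gamma_\ell$ with $\gamma_\ell\in\overline D$, as claimed. The apolarity pairing naturally produces the sign $-\gamma/\lambda_\ell$; in the applications of interest the relevant disk is centered at the origin, so $-\overline D=\overline D$ and the sign is immaterial.

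The main obstacle is not the central idea but the degenerate bookkeeping. When $\deg b<n$ (that is, $b_n=0$), when some $\lambda_\ell=0$, or when $b_0=0$, the companion $B^\ast$ loses degree and one or more of its zeros escapes to $\infty$; to keep Grace's theorem applicable I would read ``circular region'' on the Riemann sphere and count the zero at infinity with its multiplicity, or else run a continuity argument in which $b$ is replaced by a nearby full-degree polynomial with simple nonzero roots and the conclusion is recovered in the limit. Pinning down these limiting cases, together with the sign convention noted above, is the only delicate point.
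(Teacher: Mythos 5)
The paper offers no proof of this statement: it is quoted as a classical result with citations to Szeg\H{o} (1922) and Borwein--Erd\'elyi, so there is nothing internal to compare your argument against. Your proof is the standard apolarity argument (essentially Marden's proof of Theorem (16,1) in \emph{Geometry of Polynomials}), and it is sound: the computation showing that $B^\ast(z)=z^n b(-\gamma/z)$ is apolar to $a$ precisely when $c(\gamma)=0$ is correct, Grace's coincidence theorem then places a zero $-\gamma/\lambda_\ell$ of $B^\ast$ in $\overline D$, and your handling of the degenerate cases ($\gamma=0$, $\deg b<n$, $\lambda_\ell=0$) by passing to the Riemann sphere or by perturbation is the standard remedy. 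One point worth making explicit: what your argument actually yields is $\gamma=-\lambda_\ell\gamma_\ell$ with $\gamma_\ell\in\overline D$, which is the correct classical statement; the version printed in the paper drops the minus sign and is therefore literally valid only when $\overline D$ is symmetric about the origin. You flag this, correctly, as immaterial for the paper's use of the theorem (in Theorem \ref{thm:3.2} the relevant disk is centered at $0$), but as a standalone statement the theorem should carry the sign, so your proof in fact exposes a small imprecision in the paper's formulation rather than a gap in your own reasoning.
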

\begin{lem}[Cauchy's. Theorem (27,2) in  \cite{MR0225972}] 
If  $P(z)=a_n z^n+\dots +a_1z+a_0$ is a complex polynomial of degree at least one, 
then all the zeros of $P$ lie in a closed circle
\[
|z|\le 1+A,
\]
where $A=\max\{|a_0|,...,|a_{n-1}|\}/|a_n|$.
\end{lem}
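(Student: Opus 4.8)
The plan is to prove the contrapositive geometric statement: every point $z$ with $|z|>1+A$ satisfies $P(z)\neq 0$, so that all zeros of $P$ must lie in the closed disk $|z|\le 1+A$. First I would isolate the leading term, writing $P(z)=a_nz^n+\sum_{\ell=0}^{n-1}a_\ell z^\ell$, and apply the reverse triangle inequality to obtain $|P(z)|\ge |a_n|\,|z|^n-\sum_{\ell=0}^{n-1}|a_\ell|\,|z|^\ell$.

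Next I would invoke the defining bound $|a_\ell|\le A\,|a_n|$ for $0\le \ell\le n-1$, together with the standing assumption $|z|>1+A\ge 1$, to estimate the tail by a geometric sum: $\sum_{\ell=0}^{n-1}|a_\ell|\,|z|^\ell\le A\,|a_n|\sum_{\ell=0}^{n-1}|z|^\ell=A\,|a_n|\,\dfrac{|z|^n-1}{|z|-1}$, where the closed form is legitimate precisely because $|z|>1$. Combining the two estimates yields the lower bound $|P(z)|\ge |a_n|\Bigl(|z|^n-A\,\dfrac{|z|^n-1}{|z|-1}\Bigr)$.

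The final and decisive step is a short inequality manipulation showing the parenthesis is strictly positive: one has $A\dfrac{|z|^n-1}{|z|-1}\le A\dfrac{|z|^n}{|z|-1}=|z|^n\cdot\dfrac{A}{|z|-1}<|z|^n$, where the last, strict, inequality is exactly the hypothesis $|z|-1>A$ rewritten as $\dfrac{A}{|z|-1}<1$. Since $|a_n|\neq 0$ (as $P$ has degree $n\ge 1$), this gives $|P(z)|>0$, so $z$ is not a zero of $P$; hence no zero can lie outside $|z|\le 1+A$, which is the claim.

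I do not expect a genuine obstacle here, as the argument is elementary. The only points requiring a little care are keeping the geometric-series manipulation valid (it uses $|z|>1$, which is guaranteed by $|z|>1+A$ together with $A\ge 0$) and pinpointing exactly where the strict inequality $|z|-1>A$ is consumed; the degenerate case $A=0$, where $P(z)=a_nz^n$, is also covered by the same chain since then the tail bound vanishes.
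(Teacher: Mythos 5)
Your argument is correct: the bound $|a_\ell|\le A|a_n|$, the reverse triangle inequality, the geometric-sum estimate (valid since $|z|>1+A\ge 1$), and the final strict inequality $A/(|z|-1)<1$ together give $|P(z)|>0$ outside the disk, and the degenerate case $A=0$ is handled. The paper itself offers no proof of this lemma --- it is quoted directly from Marden \cite{MR0225972} --- and your argument is exactly the classical one given there, so there is nothing to reconcile.
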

\begin{lem}[Datt and Govil \cite{ZBMATH3610193}] 
If $P(z)=z^n+a_{n-1}z^{n-1}+\dots+a_0$ is a complex polynomial of degree at 
least one, then all the zeros of $P$ lie in a ring shaped region
\[
\dfrac{|a_0|}{2(1+B)^{n-1}(1+nB)}\le |z|\le 1+\lambda_0 B,
\]
where $B=\max\{|a_0|,...,|a_{n-1}|\}$, and $\lambda_0$ is the unique root of the 
equation $(x-1)(1+Bx)^n+1=0$ in the interval $[0,1]$.
\end{lem}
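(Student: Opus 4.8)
The plan is to establish the upper and lower bounds separately. For the upper bound I would argue that $P$ cannot vanish once $|z|$ is large enough. Writing $|z|=r$ and using the triangle inequality with $|a_j|\le B$ for $0\le j\le n-1$,
\[
|P(z)|\ge r^n-B\sum_{k=0}^{n-1}r^k=r^n-B\,\frac{r^n-1}{r-1}\qquad(r>1),
\]
so $P(z)\neq0$ as soon as $h(r):=r^{n+1}-(1+B)r^n+B>0$. The range $r\le1$ is harmless because $1+\lambda_0B\ge1$. Thus the whole problem reduces to locating the largest real zero of $h$ exceeding $1$.

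The key computation is the substitution $r=1+\lambda B$, which yields the identity
\[
h(1+\lambda B)=(1+\lambda B)^n\bigl[(1+\lambda B)-(1+B)\bigr]+B=B\bigl[(\lambda-1)(1+B\lambda)^n+1\bigr],
\]
that is, $h(1+\lambda B)$ is exactly $B$ times the left-hand side of the equation defining $\lambda_0$. I would then study $g(\lambda):=(\lambda-1)(1+B\lambda)^n+1$ on $[0,1]$: one has $g(0)=0$, $g(1)=1>0$, and $g'(\lambda)=(1+B\lambda)^{n-1}\bigl[1-nB+(n+1)B\lambda\bigr]$ changes sign at most once, so $g$ decreases then increases. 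Hence, apart from the trivial root $\lambda=0$ (which corresponds to $r=1$ and must be discarded), $g$ has a single genuine root $\lambda_0$, lying in $(0,1)$ precisely when $nB>1$ and equal to $0$ otherwise; moreover $g>0$ for $\lambda>\lambda_0$. Translating back, $h(r)>0$ for every $r>1+\lambda_0B$, so all zeros of $P$ satisfy $|z|\le1+\lambda_0B$. The phrase ``unique root in $[0,1]$'' is thus to be read as the unique \emph{nonzero} root.

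For the lower bound I would pass to the zeros themselves. Since $P$ is monic, $P(z)=\prod_{j=1}^n(z-z_j)$ and $|a_0|=|P(0)|=\prod_{j=1}^n|z_j|$. By Cauchy's lemma (equivalently, by the upper bound just obtained) each $|z_j|\le1+B$, so writing $m=\min_j|z_j|$ gives $|a_0|\le m\,(1+B)^{n-1}$, that is $m\ge|a_0|/(1+B)^{n-1}$. Because $2(1+nB)\ge1$, this already implies the asserted inequality
\[
|z|\ge\frac{|a_0|}{2(1+B)^{n-1}(1+nB)}.
\]
The same conclusion follows by applying the upper bound to the reciprocal polynomial $z^nP(1/z)/a_0$, whose zeros are the $1/z_j$; estimating its largest coefficient crudely by $1+B$ gives $m\ge|a_0|/\bigl(2(1+B)\bigr)$, again stronger than needed. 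Either way the stated constant is valid but not sharp, the factors $(1+nB)$ and the full power $(1+B)^{n-1}$ being more than the argument requires.

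The one delicate point is the root analysis of $g$: proving that the single sign change of $g'$ forces a unique nonzero root $\lambda_0\in(0,1)$, correctly separating it from the spurious root at $\lambda=0$, and checking that $g>0$ beyond $\lambda_0$. Once this is in place, both inequalities follow from elementary estimates.
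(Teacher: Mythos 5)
Your argument is correct, but note that the paper itself offers no proof of this lemma --- it is quoted verbatim from Datt and Govil with a citation --- so there is nothing in the text to compare against; what you have written is a self-contained replacement. Your upper bound is the standard refinement of Cauchy's bound: the reduction to $h(r)=r^{n+1}-(1+B)r^n+B>0$, the substitution $r=1+\lambda B$ giving $h(1+\lambda B)=B\bigl[(\lambda-1)(1+B\lambda)^n+1\bigr]$, and the monotonicity analysis of $g$ via the single sign change of $g'(\lambda)=(1+B\lambda)^{n-1}\bigl[1-nB+(n+1)B\lambda\bigr]$ are all checked correctly, and you are right to flag that $x=0$ is always a root of the defining equation, so the lemma's phrase ``unique root in $[0,1]$'' only makes literal sense when $nB\le 1$ and must otherwise be read as the unique nonzero root. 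Your lower bound is genuinely different from (and simpler than) Datt and Govil's: writing $|a_0|=\prod_j|z_j|$ and bounding the other $n-1$ factors by $1+B$ gives $|z_j|\ge |a_0|/(1+B)^{n-1}$ for every zero, which is strictly stronger than the stated constant since $2(1+nB)\ge 2$; the original authors instead work with a growth estimate for $\max_{|z|=1}|P(z)|$, which is where the factor $2(1+nB)$ comes from. The only blemish is the throwaway alternative via the reciprocal polynomial: the coefficients of $z^nP(1/z)/a_0$ are $a_j/a_0$, whose maximum modulus is $\max\{B,1\}/|a_0|$ rather than $1+B$, so that aside needs the extra step $|z_j|\ge |a_0|/(|a_0|+\max\{B,1\})\ge |a_0|/(2(1+B))$ using $|a_0|\le B$; since it is redundant, this does not affect the validity of the proof.
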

For more information about inequalities that satisfy the zeros of complex polynomials, 
read the survey \cite{MR1894720}.

%Recall that the convex hull of a set $G$ is the smallest convex set containing $G$, 
%that is, the intersection of all closed containing $G$.

Sendov’s conjecture \cite[p. 267]{MR1219199} asserts that if a  polynomial 
$f(z)$ of degree $n \ge  2$ 
has all of its zeroes in $\mathbb D$, then for each such zero $z_0$ there is a zero of the 
derivative $f'(z)$ in the $\overline{D(z_0,1)}$.

We are going to handle with $k$-polar polynomials. In the $k=1$ case it is 
straightforward to obtain 
\begin{equation} \label{eq:4}
(n+1)\int_{\xi}^z L_n(t) dt = (z-\xi)Q_{n;1}(z;\xi),
\end{equation} 
therefore it is logical to call $Q_{n;1}(z;\xi)$ the $n$-th first order 
polar polynomial of $L_n(z)$ (see \cite{MR1882659,MR2764236}). 
As a consequence of  \eqref{eq:4} we obtain
\begin{equation} \label{ eq:5}
(n+1)L_n(z)=Q_{n;1}(z;\xi)+(z-\xi)Q_{n;1}(z;\xi).
\end{equation}
\begin{rem}
Observe that, by construction, $Q_{n;1}(z;\xi)$ is a monic polynomial of degree $n$
and the pole of this polynomial is not irregular. In fact,
\[
\lim_{z\to \xi} Q_{n;1}(z)= \lim_{z\to \xi} \dfrac{\displaystyle (n+1)\int_\xi^z L_n(t)dt}{z-\xi}
=(n+1) L_n(\xi).
\]
Analogous calculations can be done in order to see that 
the $k$-polar monic polynomial $Q_{n;1}(z;\xi)$ has degree $n$
and the pole of this polynomial is not irregular.
\end{rem}
%%%%%%%%%%%%%%%%%%%%%%%%%%%%%%%%%%%%%
\section{Localization of zeros of polar polynomials}
%%%%%%%%%%%%%%%%%%%%%%%%%%%%%%%%%%%%%
We prove that all the zeros of the $k$-polar monic polynomial 
$Q_{n;k}(z;\xi)$ are contained in a disc whose radius is independent of $n$. 
First, let us express the polynomials $L_n(z)$ and  $Q_{n;k}(z;\xi)$ in terms of 
powers of $z-\xi$, that is
\begin{equation} \label{eq:9}
L_n(z)=\sum_{\ell=0}^n a_{n,\ell} (z-\xi)^\ell, \quad 
Q_{n;k}(z)=\sum_{\ell=0}^n b_{n,\ell;k} (z-\xi)^\ell,\quad k=1, 2, ..., 
\end{equation}
where $a_{n,n}=b_{n,n;k}=1$ for all $k=1, 2, ..$.
\begin{lem}\label{lem:3.1}
Let $k$ be a positive integer. Let $\xi\in \mathbb C$. 
The coefficients of $L_n(z)$ and $Q_{n;k}(z;\xi)$ are fulfill the relations
\begin{equation} \label{eq:10}
b_{n,\ell;k}=\dfrac{(n+k)\cdots (n+1)}{(\ell+k)\cdots (\ell+1)}\, a_{n,\ell},
\quad \ell=0, 1, ..., n-1.
\end{equation} 
\end{lem}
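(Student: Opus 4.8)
The plan is to work in the shifted variable $w=z-\xi$, in which both the differential operator and the power expansions in \eqref{eq:9} take their simplest form. Since $d/dz=d/dw$, substituting $w=z-\xi$ turns the defining relation of Definition~\ref{def:1.3} into
\[
\frac{d^k}{dw^k}\Bigl[w^k\,Q_{n;k}\Bigr]=(n+1)\cdots(n+k)\,L_n,
\]
where now $L_n=\sum_{\ell=0}^n a_{n,\ell}w^\ell$ and $Q_{n;k}=\sum_{\ell=0}^n b_{n,\ell;k}w^\ell$.

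First I would expand the left-hand side termwise. Multiplying by $w^k$ raises each monomial $w^\ell$ to $w^{\ell+k}$, and the elementary identity
\[
\frac{d^k}{dw^k}\,w^{\ell+k}=(\ell+k)(\ell+k-1)\cdots(\ell+1)\,w^\ell
\]
shows that
\[
\frac{d^k}{dw^k}\Bigl[w^k\,Q_{n;k}\Bigr]
=\sum_{\ell=0}^n b_{n,\ell;k}\,(\ell+k)\cdots(\ell+1)\,w^\ell.
\]
Comparing, coefficient by coefficient, with $(n+1)\cdots(n+k)\sum_{\ell=0}^n a_{n,\ell}w^\ell$ yields for every $\ell$ the single linear relation
\[
b_{n,\ell;k}\,(\ell+k)\cdots(\ell+1)=(n+1)\cdots(n+k)\,a_{n,\ell}.
\]
Since the factor $(\ell+k)\cdots(\ell+1)$ never vanishes for $\ell\ge 0$, I can solve for $b_{n,\ell;k}$ and obtain exactly \eqref{eq:10} for $\ell=0,\dots,n-1$; the case $\ell=n$ reduces to $b_{n,n;k}=a_{n,n}=1$, which is consistent with the monic normalisation recorded after \eqref{eq:9} and in fact explains why the constant $(n+1)\cdots(n+k)$ appears on the right-hand side of the defining equation.

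Finally I would dispatch the question of well-posedness, the only point requiring a word of care, so that the coefficients $b_{n,\ell;k}$ are unambiguously defined. This is immediate because the operator $P\mapsto \tfrac{d^k}{dz^k}[(z-\xi)^kP]$ is injective on polynomials: if the image vanishes then $(z-\xi)^kP$ is a polynomial of degree less than $k$ that is divisible by $(z-\xi)^k$, hence identically zero, so $P\equiv 0$. Thus the coefficient-matching above does not merely produce a solution but \emph{the} solution of degree $n$, and the stated formula follows. I do not anticipate any genuine obstacle beyond keeping track of the falling-factorial bookkeeping; the heart of the argument is the single monomial computation for $d^k w^{\ell+k}/dw^k$.
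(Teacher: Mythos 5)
Your proof is correct and follows the same route as the paper: expand both sides in powers of $z-\xi$, apply the identity $\frac{d^k}{dw^k}w^{\ell+k}=(\ell+k)\cdots(\ell+1)w^\ell$, and compare coefficients. The paper's own proof is just a one-line sketch of exactly this computation, so your version merely fills in the details (and adds a harmless well-posedness remark).
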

\begin{proof} 
By Definition \ref{def:1.3} we have
\begin{equation} \label{eq:11} 
\dfrac{ d^k}{dz^k} (z-\xi)^k Q_{n;k}(z;\xi)=(n+k)\cdots (n+1)L_n(z).
\end{equation} 
Start with \eqref{eq:11}, use the power expansion \eqref{eq:9} 
and take into account the linearity of the derivative. 
If we compare the power coefficients in these expressions the result holds.
\end{proof} 
By using this result we obtain the first main result.
\begin{thm}\label{thm:3.2} 
Let $\mu$ be a finite measure defined on the Borelian 
$\sigma$-algebra of $\mathbb C$ such that it contains an infinite 
number of points and let $(L_n(z))$ the system of monic orthogonal 
polynomials with respect to $\mu$.
Let $\xi$ be a fixed complex number. Let $k$ be a positive integer. 

All the zeros of $Q_{n;k}(z;\xi)$ are contained in the closed disk 
$\overline{D(0,|\xi|+(k+1)(1+|\xi|)}$.
\end{thm}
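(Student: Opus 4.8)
The plan is to pass to the shifted variable $w=z-\xi$ and to recognise $Q_{n;k}(z;\xi)$, written in powers of $w$, as a Szeg\H{o} composition of the shifted orthogonal polynomial $L_n(w+\xi)$ with an explicit ``multiplier'' polynomial depending only on $n$ and $k$. By \eqref{eq:9} and Lemma~\ref{lem:3.1} we have $L_n(w+\xi)=\sum_{\ell=0}^n a_{n,\ell}w^\ell$ and $Q_{n;k}(w+\xi)=\sum_{\ell=0}^n \beta_\ell a_{n,\ell}w^\ell$ with $\beta_\ell=\dfrac{(n+k)\cdots(n+1)}{(\ell+k)\cdots(\ell+1)}$. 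First I would normalise by binomial coefficients, writing $a_{n,\ell}=A_\ell\binom{n}{\ell}$, so that the three polynomials $\sum A_\ell\binom{n}{\ell}w^\ell$, $\,\sum \beta_\ell\binom{n}{\ell}w^\ell$ and $\sum A_\ell\beta_\ell\binom{n}{\ell}w^\ell$ are exactly the triple $a$, $b$, $c$ to which Szeg\H{o}'s theorem applies, with $a(w)=L_n(w+\xi)$, $c(w)=Q_{n;k}(w+\xi)$, and the multiplier $b(w)=M(w):=\sum_{\ell=0}^n \beta_\ell\binom{n}{\ell}w^\ell$.

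The second step is to make $M$ completely explicit. A short Gamma-function computation gives $\beta_\ell\binom{n}{\ell}=\dfrac{(n+k)!}{(\ell+k)!\,(n-\ell)!}=\binom{n+k}{\ell+k}$, whence
\[
M(w)=\sum_{\ell=0}^n\binom{n+k}{\ell+k}w^\ell,\qquad
w^kM(w)=\sum_{j=k}^{n+k}\binom{n+k}{j}w^{j}=(1+w)^{n+k}-\sum_{j=0}^{k-1}\binom{n+k}{j}w^{j}.
\]
Thus the zeros of $M$ are precisely the nonzero roots of the binomial $(1+w)^{n+k}$ with its bottom $k$ terms deleted (equivalently, on reversing the polynomial, the reciprocals of the zeros of the partial sum $\sum_{i=0}^n\binom{n+k}{i}w^i$ of $(1+w)^{n+k}$, a classically studied object).

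The crux, and the step I expect to be the main obstacle, is to prove that every zero of $M$ lies in $\overline{D(0,k+1)}$. I would approach it by a contour argument (Rouch\'e's theorem or the argument principle) on the circle $|w|=k+1$: writing $F(w)=(1+w)^{n+k}$ and $T(w)=\sum_{j=0}^{k-1}\binom{n+k}{j}w^{j}$, if one controls $|T(w)|\le|F(w)|$ there, then $F$ and $w^kM(w)=F-T$ have the same number of zeros inside, namely $n+k=\deg(w^kM)$, forcing all zeros of $M$ into $\overline{D(0,k+1)}$. The difficulty is that this bound is tight: for $n=1$ one has $M(w)=(k+1)+w$, whose only zero $w=-(k+1)$ lies exactly on $\partial D(0,k+1)$, and there $|T|=|F|$. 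I would therefore isolate the extremal configuration $n=1$ by hand, and for $n\ge 2$ establish the strict inequality $|T(w)|<|F(w)|$ on $|w|=k+1$; the binding case is $w$ near $-(k+1)$, where $|F|\ge(|w|-1)^{n+k}=k^{n+k}$ grows geometrically in $n$ while $|T|$ grows only polynomially in $n$, so the comparison is delicate only for the first few values of $n$, which can be checked directly.

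Finally I would invoke Szeg\H{o}'s theorem. The zeros of $a(w)=L_n(w+\xi)$ are the points $z_j-\xi$, where $z_1,\dots,z_n$ are the zeros of $L_n$; by the result of Simon recalled in the introduction these satisfy $|z_j|\le 1$, so $|z_j-\xi|\le 1+|\xi|$ and all zeros of $a$ lie in the origin-centred disk $\overline{D(0,1+|\xi|)}$. Taking $b=M$, whose zeros $\lambda_\ell$ obey $|\lambda_\ell|\le k+1$ by the previous step, Szeg\H{o}'s theorem yields that every zero of $c(w)=Q_{n;k}(w+\xi)$ has the form $\lambda_\ell\gamma_\ell$ with $\gamma_\ell\in\overline{D(0,1+|\xi|)}$, hence $|w|=|\lambda_\ell|\,|\gamma_\ell|\le(k+1)(1+|\xi|)$. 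Returning to $z=w+\xi$ gives $|z|\le|\xi|+(k+1)(1+|\xi|)$, that is, all zeros of $Q_{n;k}(z;\xi)$ lie in $\overline{D(0,|\xi|+(k+1)(1+|\xi|))}$, as claimed.
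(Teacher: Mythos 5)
Your proposal follows the same skeleton as the paper's proof: pass to $\omega=z-\xi$, use Lemma~\ref{lem:3.1} to recognise $Q_{n;k}$ as the Grace--Szeg\H{o} composition of $L_n(\omega+\xi)$ (whose zeros lie in $\overline{D(0,1+|\xi|)}$ by Simon's result) with the multiplier $g_{n;k}(\omega)=\sum_{\ell=0}^n{n+k\choose \ell+k}\omega^\ell$, bound the moduli of the multiplier's zeros by $k+1$, multiply the two radii via Szeg\H{o}'s theorem, and shift back to get $|\xi|+(k+1)(1+|\xi|)$. The only real divergence is at the crux, the bound $R_{n,k}\le k+1$. The paper identifies $g_{n;k}$ with a Jacobi polynomial $P_n^{(k,-k-n)}$ through \eqref{eq:12} and invokes the $n\to\infty$ localization of those zeros near $\overline{D(-1,1)}$ from Kuijlaars et al., concluding only that ``we can assume $R_{k,n}\le k+1$'' --- an asymptotic justification, not a proof for each $n$. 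You instead exploit $\omega^k g_{n;k}(\omega)=(1+\omega)^{n+k}-\sum_{j=0}^{k-1}{n+k\choose j}\omega^j$ (the integrated form of the paper's \eqref{eq:13}) and propose Rouch\'e on $|\omega|=k+1$; this is more elementary and, if completed, would actually close the hole the paper leaves open.

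However, as written your crux step has a concrete gap. The inequality $|T(\omega)|<|F(\omega)|$ on $|\omega|=k+1$ is supported only by the heuristic ``$k^{n+k}$ beats a polynomial in $n$,'' together with an unexecuted finite check; it is exactly attained at $n=1$ (zero at $-(k+1)$) and remains tight for small $n$ (e.g.\ $k=2$, $n=2$ gives $|T|\le 13$ against $|F|\ge 16$ at the worst points). More seriously, for $k=1$ your minorant gives $|F|\ge k^{n+1}=1$ while $|T|=1$, so the strict comparison never gets off the ground and that whole case must be handled by the explicit factorization of $(1+\omega)^{n+1}-1$, whose roots $-1+\zeta$ (with $\zeta^{n+1}=1$) do lie in $\overline{D(0,2)}$ --- a fix you do not mention (you isolate $n=1$, but not $k=1$). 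So: same proof architecture, a genuinely different and arguably sounder idea for the key lemma, but that lemma is not actually established in your proposal --- nor, strictly speaking, in the paper.
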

\begin{proof} 
Since $P_n$ is OPUC,  by \cite{MR2105088} we know the zeros of 
$L_n(z)$ lie in $\mathbb D$. Let us define $\omega:=z-\xi$ and, taking into 
account Lemma \ref{lem:3.1}, let us consider the polynomials
\[
f_n(\omega)=L_n(z)=\sum_{\ell=0}^n \dfrac{a_{n,\ell}}
{{n\choose \ell}}{n\choose \ell} \omega^\ell,
\]
and 
\[\begin{split}
g_{n;k}(\omega)&=\sum_{\ell=0}^n \dfrac{(n+k)\cdots (n+1)}
{(\ell+k)\cdots (\ell+1)}{n\choose \ell} \omega^\ell
=\sum_{\ell=0}^n {n+k\choose \ell+k} \omega^\ell
\\ &=\dfrac{(n+k)\cdots (k+1)}{n!}F(-n,1;k+1;-\omega),
\end{split}\]
where $F(a,b;c;z)$ is the Gau{\ss} function \cite[15.2.2]{NIST:DLMF}.

The “composition” of $f_n(z)$ and $g_n(z)$ leads to  
\[
h_{n;k}(\omega)=\sum_{\ell=0}^n \dfrac{b_{n,\ell;k}}
{{n\choose \ell}}{n\choose \ell} \omega^\ell= Q_{n;k}(z;\xi).
\]

Due to Theorem 3.2 in \cite{driver2002zeros} we know that $g_{n;k}(z)$ has 
non-real zeros for $n$ even and $n-1$ non-real zeros for $n$ odd. 
It is known this function is defined for $|\omega|<1$.

\begin{rem}
Observe that by using \cite[Corollary 2]{MR1931616} we obtain
\begin{equation} \label{eq:12}
(1-z)^{n+k}F(n+k+1,k;k+1;z)=F(-n,1;k+1;z)
=\dfrac{n!}{(k+1)\cdots (k+n)}P_n^{(k,-k-n)}(1-2z),
\end{equation} 
where $P_n^{(\alpha,\beta)}(z)$ is the Jacobi polynomial of degree 
$n$ and parameters $\alpha$ and $\beta$ (see, for example, \cite[18.5.7]{NIST:DLMF} 
we can express the polynomial $g_{n;k}(z)$ in terms of the Jacobi polynomials.
\end{rem}
\begin{rem}
Notice that if we take the $k$-th derivative of $\omega^k g_{n;k}(\omega)$ 
we obtain
\begin{equation} \label{eq:13}
\dfrac {d^k}{d\omega^k} \omega^k  g_{n;k}(\omega)=
\sum_{\ell=0}^n (n+k)\cdots (n+1)
{n\choose \ell} \omega^\ell=(n+k)\cdots (n+1)(1+\omega)^n.
\end{equation}
\end{rem}
Therefore for one side we know that if $z_{n,0}$ is a zero of $L_n(z)$ 
then $|z_{n,0}|\le 1$, so $|w_{n,0}|\le 1+|\xi|$. 
On the other hand,  we know that if $w_{n,1}$ is a root of 
$g_{n;k}(z)$ then $|w_{n,1}|\le R_{k,n}$. With these two 
inequalities we can claim that, using  Szeg{\H{o}}'s Theorem, 
for any root of $h_{n;k}(z)$, namely $|z_{n,3}|$, and since 
$|\omega_{n,3}|=|z_{n,3}+\xi|$, the following inequality holds:
\begin{equation} \label{eq:14} 
|z_{n,3}|\le (1+|\xi|)R_{n,k}+|\xi|.
\end{equation} 
Since $P_1^{(k,-k-n)}(1+\omega)=\omega+k+1$ and 
the zeros of the Jacobi polynomials $P_n^{(k,-k-n)}(1+\omega)$ 
tend to the circle $\overline{D(-1,1)}$ when $n\to \infty$ (see \cite{MR2149265}), 
we can assume $R_{k,n}\le k+1$. Hence the result follows.
\end{proof} 
%%%%%%%%%%%%%%%%%%%%%%%%%%%%%%%%%%%%%
\section{The examples}
%%%%%%%%%%%%%%%%%%%%%%%%%%%%%%%%%%%%%
We will consider different examples which let us to explain 
why sometimes we can consider a ring shape region (strictly speaking) where the 
zeros are located in, and some other situations we cannot. Of course the 
region depends on different parameters such as the value $\xi$, as well as 
the integers $k$ and $n$, among others. These examples can 
be consider as canonical.

%\subsection
{\bf First example. The Bernstein-Szeg{\H{o}} Polynomials}
Let $\beta\in \mathbb D$, and let us consider the measure 
\begin{equation} \label{eq:15}
d\mu(z)=\dfrac{1}{|z+\beta|^2}\dfrac{d\theta}{2\pi}.
\end{equation} 
%where $z=\exp(i \theta)$.

Notice that if $\beta=r\exp(-i \phi)$, then \eqref{eq:15} becomes \cite[(1.6.2)]{MR2105088} 
\[
d\mu(z)=P_r(\theta,\phi)\dfrac{d\theta}{2\pi},
\]
where $P_r$ is the Poisson kernel of 
\[
\int \Re (g(z)) z^{-n}\, \dfrac{d\theta}{2\pi}.
\]
Let $(L_n(z))$ be the monic orthogonal polynomials with respect to $\mu$. 
These polynomials can be expressed as follows \cite{MR2556830, MR1239908}:
\begin{equation}\label{eq:16} 
L_n(z)=z^n+\beta z^{n-1},\quad n=1, 2, ....
\end{equation} 
From this expression their first order monic polar polynomials are defined as
\begin{equation} \label{eq:17}
Q_{n;1}(z;\xi)=(n+1)\dfrac{\displaystyle \int_\xi^z L_n(t) dt}{z-\xi}=
\dfrac{z^{n}(nz+(n+1)\beta)-\xi^{n}(n\xi+(n+1)\beta)}{n(z-\xi)}.
\end{equation}
The second order monic polar polynomial of degree $n$ is
\begin{equation} \label{eq:18}
\hspace{-10mm}Q_{n;2}(z;\xi)=\dfrac{z^{n+1}(n z+\beta n+2 \beta)+\xi ^n \left(n (n+1) \xi ^2+n(n+2) \xi 
(\beta-z)-\beta (n+1) (n+2) z\right)}{n (n+1) (n+2) (z-\xi)^2}.
\end{equation}
In Figure \ref{fig1} we  show the zeros of these polynomials under different settings.
\begin{figure}[!hbt]
\begin{center}
\begin{minipage}{0.46\textwidth}
\begin{tikzpicture}[domain=-0.7:0.7,font=\sffamily, scale=6]
\draw[-stealth] (-0.5,0) -- (0.6,0) node[above] {$x$};
\draw[-stealth] (0,-0.55) -- (0,0.55) node[left] {$y$};
\draw plot [only marks, mark=*, mark options={fill=black},mark size=0.3pt]
coordinates{(-0.3379, -0.02939) (-0.3374, 0.02357) (-0.3303, -0.08210)
(-0.3288, 0.07559) (-0.3145, -0.1334) (-0.3123, 0.1255) (-0.2908, 
-0.1822) (-0.2885, 0.1722) (-0.2594, -0.2273) (-0.2579, 0.2145)
(-0.2562, -0.4438) (-0.2212, 0.2516) (-0.2202, -0.2675) (-0.1793, 
0.2825) (-0.1736, -0.3007) (-0.1332, 0.3065) (-0.1216, -0.3245)
(-0.08399, 0.3232) (-0.06716, -0.3383) (-0.03286, 0.3321)
(-0.01235, -0.3430) (0.01898, 0.3330) (0.04173, -0.3391) (0.07032, 
0.3259) (0.09403, -0.3271) (0.1199, 0.3110) (0.1435, -0.3073)
(0.1891, -0.2804) (0.2094, 0.2594) (0.2298, -0.2469) (0.2471, 
0.2239) (0.2649, -0.2077) (0.2789, 0.1829) (0.2934, -0.1638)
(0.3040, 0.1376) (0.3147, -0.1161) (0.3219, 0.08887) (0.3284, 
-0.06576) (0.3321, 0.03793) (0.3343, -0.01402)}; % n=40
\draw[gray] plot [only marks, mark=*, mark options={fill=gray},mark size=0.3pt]
coordinates{ (-0.3401, 0.007950) (-0.3362, -0.06303) (-0.3296, 0.07766) 
(-0.3178, -0.1326) (-0.3054, 0.1434) (-0.2851, -0.1981) (-0.2685, 
0.2025) (-0.2583, -0.4474) (-0.2380, -0.2570) (-0.2205, 0.2528) 
(-0.1760, -0.3049) (-0.1636, 0.2921) (-0.1036, -0.3346) (-0.09990,
0.3190) (-0.03222, 0.3323) (-0.02903, -0.3460) (0.03665, 0.3315) 
(0.04409, -0.3415) (0.1039, 0.3168) (0.1135, -0.3227) (0.1769, 
-0.2905) (0.2224, 0.2484) (0.2321, -0.2466) (0.2688, 0.1975) 
(0.2769, -0.1928) (0.3039, 0.1382) (0.3097, -0.1313) (0.3262, 
0.07298) (0.3292, -0.06460) (0.3348, 0.004421)};  % n=30
\draw  plot[only marks, mark=*, mark options={fill=white},mark size=0.3pt]
coordinates{(-0.343894212383155215, -0.0238367726899163517) 
(-0.3311833801254466726, 
  0.0817439042110546115) (-0.3242955991613638362, 
-0.1307602066248916952) (-0.2882224810856428162, 
  0.1774301993002356972) (-0.2715541484374780871, 
-0.2309808114417706562) (-0.2624511763208994692, 
-0.4545787718940157314) (-0.2193045321285863175, 
  0.2551982092988473708) (-0.1811142539804016368, 
-0.313699089868989404) (-0.1308585952341046769, 
  0.3084424609047654554) (-0.0642581762765776751, 
-0.3506631967707916775) (-0.0309718438839243065, 
  0.3326437904324377765) (0.0489061388394234584, 
-0.3462283305216824269) (0.0712897724579758331, 
  0.325810264630665313) (0.1513038554978749193, 
-0.3097395104832116719) (0.2363839877140183809, 
-0.245941268394307882) (0.2465150797548988203, 
  0.2246438862939519305) (0.2977703005353614512, 
-0.1608928909118287018) (0.3035638948676002677, 
  0.1394994916106947442) (0.3306603983173931384, 
-0.0623241913389926785) (0.3325483043663677732, 0.0408943626761028061)}; % n=20
\draw[black] plot[only marks, mark=*, mark options={fill=gray},mark size=0.3pt] 
coordinates{(-0.343, -0.124) (-0.335, 0.093) (-0.271, -0.470) (-0.215, 0.262) 
(-0.198, -0.344) (-0.027, 0.333) (0.064, -0.359) (0.248, -0.243) (0.302, 0.143)
(0.334, -0.0557)};  % n=10
\begin{scope}
\draw[black] plot[mark=*, mark options={fill=gray}, mark size=0.3pt] (0.5,0.6)
node[right]{\color{black}\scriptsize $n=1$};
\draw plot[mark=*, mark options={fill=white},mark size=0.3pt] (0.5,0.5)
node[right]{\scriptsize $n=2$};
\draw[gray] plot[mark=*, mark options={fill=gray},mark size=0.3pt] (0.5,0.4)
node[right]{\scriptsize $n=3$};
\draw plot[mark=*, mark options={fill=black},mark size=0.3pt] (0.5,0.3)
node[right]{\scriptsize $n=4$};
\end{scope}
\foreach \x/\xtext in {0.33/\frac 13, 0.5/\frac 12}
\draw[shift={(\x,0)}] (0pt, 1pt) -- (0pt,-1pt) node[below] {\tiny $\xtext$};
\foreach \y/\ytext in {0.33/\frac 13, -0.5/-\frac 12}
\draw[shift={(0,\y)}] (-1pt,0pt) -- (1pt,0pt) node[right] {\tiny $\ytext$};
\end{tikzpicture}
\end{minipage}
%\hspace{1cm}
\begin{minipage}{0.46\textwidth}
\begin{tikzpicture}[domain=-0.7:0.7,font=\sffamily, scale=6]
\draw[-stealth] (-0.5,0) -- (0.6,0) node[above] {$x$};
\draw[-stealth] (0,-0.55) -- (0,0.55) node[left] {$y$};
\draw plot [only marks, mark=*, mark options={fill=black},mark size=0.3pt]
coordinates{(-0.3751, -0.03613) (-0.3747, 0.02195) (-0.3667, -0.09396) 
(-0.3655, 0.07897) (-0.3495, -0.1503) (-0.3477, 0.1337) (-0.3236, 
-0.2040) (-0.3219, 0.1848) (-0.2894, -0.2540) (-0.2887, 0.2311) 
(-0.2625, -0.4547) (-0.2489, 0.2716) (-0.2465, -0.2991) (-0.2035, 
0.3054) (-0.1945, -0.3368) (-0.1536, 0.3315) (-0.1358, -0.3630) 
(-0.1004, 0.3495) (-0.07527, -0.3776) (-0.04523, 0.3587) 
(-0.01486, -0.3823) (0.01045, 0.3590) (0.04456, -0.3778) (0.06519, 
0.3498) (0.1020, -0.3646) (0.1174, 0.3306) (0.1563, -0.3429) 
(0.2064, -0.3135) (0.2276, 0.2670) (0.2511, -0.2770) (0.2704, 
0.2314) (0.2896, -0.2343) (0.3056, 0.1885) (0.3210, -0.1864) 
(0.3333, 0.1402) (0.3445, -0.1345) (0.3528, 0.08782) (0.3597, 
-0.07973) (0.3639, 0.03278) (0.3662, -0.02353)}; % n=40
\draw[gray] plot [only marks, mark=*, mark options={fill=gray},mark size=0.3pt]
coordinates{(-0.3872, 0.003888) (-0.3827, -0.07561) (-0.3758, 0.08189) 
(-0.3623, -0.1536) (-0.3491, 0.1554) (-0.3261, -0.2275) (-0.3083, 
0.2215) (-0.2741, -0.2951) (-0.2667, -0.4619) (-0.2554, 0.2775) 
(-0.2032, -0.3520) (-0.1925, 0.3213) (-0.1223, 0.3508) (-0.1185, 
-0.3849) (-0.04807, 0.3649) (-0.03396, -0.3962) (0.02702, 0.3625) 
(0.04810, -0.3906) (0.09929, 0.3426) (0.1259, -0.3693) (0.1970, 
-0.3334) (0.2471, 0.2573) (0.2588, -0.2845) (0.3005, 0.2047) 
(0.3091, -0.2246) (0.3400, 0.1408) (0.3460, -0.1563) (0.3650, 
0.06947) (0.3680, -0.08238) (0.3744, -0.006046)};  % n=30
\draw  plot[only marks, mark=*, mark options={fill=white},mark size=0.3pt]
coordinates{(-0.4091, -0.03467) (-0.3946, 0.08774) (-0.3869, -0.1590)
(-0.3456, 0.1985) (-0.3279, -0.2769) (-0.2743, -0.4751) (-0.2670, 
0.2883) (-0.2218, -0.3842) (-0.1663, 0.3492) (-0.07584, -0.4225)
(-0.05318, 0.3756) (0.05577, -0.4145) (0.06106, 0.3635) (0.1745, 
-0.3716) (0.2733, -0.2979) (0.2843, 0.2346) (0.3447, -0.2000)
(0.3519, 0.1417) (0.3831, -0.08706) (0.3855, 0.03061)}; % n=20
\draw[black] plot[only marks, mark=*, mark options={fill=gray},mark size=0.3pt] 
coordinates{(-0.4550, -0.1669) (-0.4423, 0.1058) (-0.3406, -0.4629) (-0.2947, 
0.3153) (-0.2306, -0.5264) (-0.06540, 0.4001) (0.08295, -0.4775) 
(0.3128, -0.3301) (0.3792, 0.1434) (0.4204, -0.09756)};  % n=10
\begin{scope}
\draw[black] plot[mark=*, mark options={fill=gray}, mark size=0.3pt] (0.5,0.6)
node[right]{\color{black}\scriptsize $n=1$};
\draw plot[mark=*, mark options={fill=white},mark size=0.3pt] (0.5,0.5)
node[right]{\scriptsize $n=2$};
\draw[gray] plot[mark=*, mark options={fill=gray},mark size=0.3pt] (0.5,0.4)
node[right]{\scriptsize $n=3$};
\draw plot[mark=*, mark options={fill=black},mark size=0.3pt] (0.5,0.3)
node[right]{\scriptsize $n=4$};
\end{scope}
\foreach \x/\xtext in {0.33/\frac 13, 0.5/\frac 12}
\draw[shift={(\x,0)}] (0pt, 1pt) -- (0pt,-1pt) node[below] {\tiny $\xtext$};
\foreach \y/\ytext in {0.33/\frac 13, -0.5/-\frac 12}
\draw[shift={(0,\y)}] (-1pt,0pt) -- (1pt,0pt) node[right] {\tiny $\ytext$};
\end{tikzpicture}
\end{minipage}
\caption{\label{fig1} Left: Zeros of $Q_{10n;1}(z;1/3 \exp(i\pi/3))$ for 
$n=1, 2, 3, 4$ in the
window $[-0.5,0.7]\times[-0.5,0.7]$, with $\beta=1/2\exp(i\pi/3)$. Right:
Zeros of $Q_{10n;2}(z;1/3 \exp(i\pi/3))$ for $n= 1, 2, 3, 4$ 
in the window $[-0.5,0.7]\times[-0.5,0.7]$, with $\beta=1/2\exp(i\pi/3)$.}
\end{center}
\end{figure}
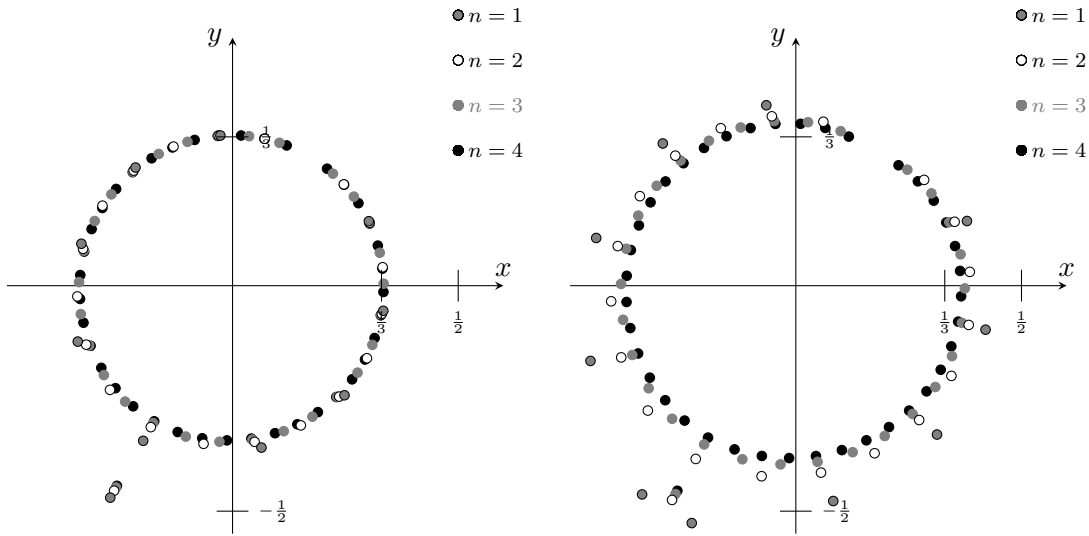 

In order to show that Sendov's conjecture remains valid, in Table \ref{tab1-2}
we present the maximum distance between each zero of $Q_{n,1}(z,\xi)$ 
and the closet zero of $Q'_{n,1}(z,\xi)$ for different values of $n$. 

\begin{table}[!hbt]
\begin{minipage}{0.46\textwidth}
\begin{center}
\begin{tabular}{c|c|l}
$n$& zero& distance\\
\hline \hline 
2&$-0.04549-0.59920 i$&0.2602\\
3&$0.25331-0.28868 i$&0.41997\\
4&$0.32559-0.13863 i$&0.65268\\
5&$0.34111-0.04380 i$&0.68222\\
6&$0.33895+0.01995i$&0.74277\\
7&$0.33098+0.06499i$&0.75514\\
8&$0.32137+0.09813i$&0.77831\\
9&$-0.04905+0.33164i$&0.78565\\
10&$-0.02743+0.33361i$&0.79638\\
11&$-0.00958+0.33426i$&0.80117\\
12&$0.00536+0.33412i$&0.80687\\
13&$0.01804+0.33350i$&0.81013\\
14&$0.02892+0.33261i$&0.81350\\
15&$0.03835+0.33155i$&0.81577\\
16&$0.04660+0.33042i$&0.81795\\
17&$0.05388+0.32925i$&0.81957\\
18&$0.06033+0.32808i$&0.82106\\
19&$0.06610+0.32693i$&0.82225\\
20&$0.07129+0.32581i$&0.82332\end{tabular}
\end{center}
\end{minipage}
\hspace{10mm}
\begin{minipage}{0.46\textwidth}
\begin{center}
\begin{tabular}{c|c|l}
$n$& zero& distance\\
\hline \hline 
2&$-0.89538-0.44530i$&0.5528\\
3&$0.40206-0.49827i$&0.85603\\
4&$0.47459-0.25270i$&0.89688\\
5&$-0.57262-0.15872i$&0.85890\\
6&$0.45849-0.01783i$&0.90690\\
7&$-0.35212-0.60990i$&0.88012\\
8&$0.41566+0.08718i$&0.90725\\
9&$-0.32375-0.56075i$&0.88470\\
10&$-0.06540+0.40007i$&0.90141\\
11&$-0.04147+0.39636i$&0.88964\\
12&$-0.02178+0.39227i$&0.89539\\
13&$-0.00532+0.38809i$&0.88893\\
14&$0.00861+0.38400i$&0.89015\\
15&$0.02054+0.38006i$&0.88620\\
16&$0.03086+0.37633i$&0.88568\\
17&$0.03986+0.37280i$&0.88297\\
18&$0.04779+0.36949i$&0.88182\\
19&$0.05481+0.36639i$&0.87977\\
20&$0.06106+0.36348i$&0.87845
\end{tabular}
\end{center}
\end{minipage}
\caption{\label{tab1-2}For every  $2\le n\le 20$, $\beta=1/2\exp(i\pi/3)$, 
we obtain the zero of  $Q_{n;1}(z;1/3 \exp(i\pi/3))$, left, and 
$Q_{n;2}(z;1/3 \exp(i\pi/3))$, right, which produces the 
maximum distance with respect to the zeros of their corresponding derivatives.}
\end{table}
\newpage 
{\bf Second example} Fix $m\ge 0$. Let 
\[
d \mu_1(\theta)=\frac {d\theta}{2\pi}+m\delta(z-1),
\]
where $z=\exp(i\theta)$ and 
\[
\int f(z)\, \delta(z-1) d\theta = f(1),\quad f\in \mathbb P[z].
\]
The monic associated orthogonal polynomials with respect to 
$\mu_1$ on $\mathbb T$ are \cite{MR1367960, MR2556830}
\begin{equation} \label{eq:19}
L_n(z)=z^n-\dfrac{m}{1+nm}\sum_{k=0}^{n-1} z^k=z^n-\dfrac{m}
{1+nm}\dfrac{z^n-1}{z-1},\quad n=1,2, ...
\end{equation} 
From this expression their first order monic polar polynomials are defined as
\begin{equation} \label{eq:20}
Q_{n;1}(z;\xi)=%(n+1)\dfrac{\displaystyle \int_\xi^z L_n(t) dt}{z-\xi}=
\dfrac{z^{n+1}-\xi^{n+1}}{z-\xi}-\dfrac{m(n+1)}{1+nm}\sum_{k=0}^{n-1}\dfrac{z^{k+1}-\xi^{k+1}}
{(k+1)(z-\xi)}.
\end{equation}
\begin{rem}
The zeros of these polynomials tend to accumulate 
around the unit circle  $\mathbb T$ whenever $|\xi|\le 1$, and 
around the closed circle $\overline{D(0,|\xi|)}$ 
whenever $|\xi|>1$. 
Since we added a mass point at $z=1$, it is 
expected that some of zeros the polar polynomials close to $z=1$
move outside of such boundary.

Due Theorem \ref{thm:3.2} we know all the zeros of 
these polynomials lie inside of 
$\overline{D(0,2+3|\xi|)}$. 
\end{rem} 

In Figure \ref{fig2} we show the zeros of these polynomials 
under different settings (in the first case $|\xi_1|+ 2(1 +|\xi_1|)=3$
 and $|\xi_2|+2(1 +|\xi_2|)=6$ in the second one).

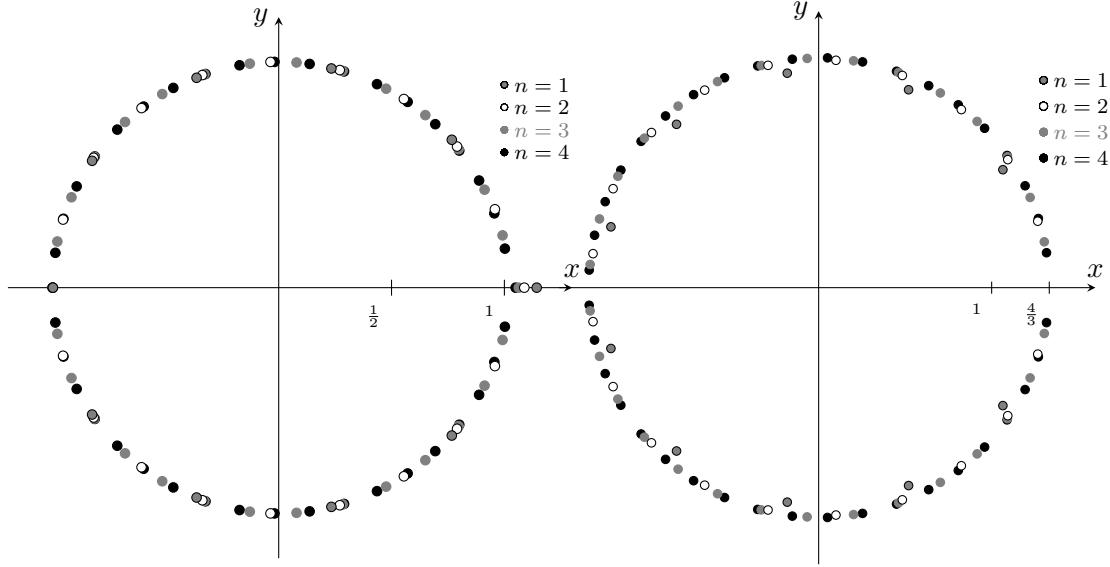
\begin{figure}[!hbt]
\begin{center}
\begin{minipage}{0.45\textwidth}
\begin{tikzpicture}[domain=-1.2:1.2,font=\sffamily, scale=3]
\draw[-stealth] (-1.2,0) -- (1.3,0) node[above] {$x$};
\draw[-stealth] (0,-1.2) -- (0,1.2) node[left] {$y$};
\draw plot [only marks, mark=*, mark options={fill=black},mark size=0.6pt]
coordinates{(-1.0017, 0) (1.0498, 0) (-0.98971, -0.15464) (-0.98971, 
  0.15464) (-0.95395, -0.30558) (-0.95395, 
  0.30558) (-0.89530, -0.44920) (-0.89530, 
  0.44920) (-0.81515, -0.58206) (-0.81515, 
  0.58206) (-0.71541, -0.70098) (-0.71541, 
  0.70098) (-0.59844, -0.80309) (-0.59844, 
  0.80309) (-0.46704, -0.88593) (-0.46704, 
  0.88593) (-0.32431, -0.94749) (-0.32431, 
  0.94749) (-0.17366, -0.98627) (-0.17366, 
  0.98627) (-0.018663, -1.0013) (-0.018663, 
  1.0013) (0.13699, -0.99214) (0.13699, 
  0.99214) (0.28959, -0.95895) (0.28959, 
  0.95895) (0.43553, -0.90245) (0.43553, 
  0.90245) (0.57135, -0.82387) (0.57135, 
  0.82387) (0.69386, -0.72495) (0.69386, 
  0.72495) (0.80020, -0.60787) (0.80020, 
  0.60787) (0.88800, -0.47517) (0.88800, 
  0.47517) (0.95552, -0.32953) (0.95552, 
  0.32953) (1.0022, -0.17315) (1.0022, 0.17315)}; % n=40
\draw[gray] plot [only marks, mark=*, mark options={fill=gray},mark size=0.6pt]
coordinates{(-1.0020, 0) (1.0636, 0) (-0.98088, -0.20473) (-0.98088, 
  0.20473) (-0.91831, -0.40083) (-0.91831, 
  0.40083) (-0.81693, -0.58005) (-0.81693, 
  0.58005) (-0.68096, -0.73482) (-0.68096, 
  0.73482) (-0.51608, -0.85858) (-0.51608, 
  0.85858) (-0.32916, -0.94606) (-0.32916, 
  0.94606) (-0.12801, -0.99345) (-0.12801, 
  0.99345) (0.078993, -0.99862) (0.078993, 
  0.99862) (0.28320, -0.96116) (0.28320, 
  0.96116) (0.47612, -0.88237) (0.47612, 
  0.88237) (0.64975, -0.76521) (0.64975, 
  0.76521) (0.79702, -0.61408) (0.79702, 
  0.61408) (0.91222, -0.43448) (0.91222, 
  0.43448) (0.99196, -0.23196) (0.99196, 0.23196)};  % n=30
\draw  plot[only marks, mark=*, mark options={fill=white},mark size=0.6pt]
coordinates{(-1.0023, 0) (1.0883, 0) (-0.95568, -0.30195) (-0.95568, 
  0.30195) (-0.82009, -0.57592) (-0.82009, 
  0.57592) (-0.60792, -0.79643) (-0.60792, 
  0.79643) (-0.33851, -0.94283) (-0.33851, 
  0.94283) (-0.036502, -1.0011) (-0.036502, 
  1.0011) (0.27054, -0.96499) (0.27054, 
  0.96499) (0.55464, -0.83663) (0.55464, 
  0.83663) (0.79029, -0.62579) (0.79029, 
  0.62579) (0.95797, -0.34771) (0.95797, 0.34771)}; % n=20
\draw[black] plot[only marks, mark=*, mark options={fill=gray},mark size=0.6pt] 
coordinates{(-1.0005, 0) (1.1433, 0) (-0.82687, -0.56264) (-0.82687, 
  0.56264) (-0.36399, -0.93073) (-0.36399, 
  0.93073) (0.23348, -0.97212) (0.23348, 
  0.97212) (0.76712, -0.65585) (0.76712, 0.65585)};  % n=10
\begin{scope}
\draw[black] plot[mark=*, mark options={fill=gray}, mark size=0.4pt] (1,0.9)
node[right]{\color{black}\scriptsize $n=1$};
\draw plot[mark=*, mark options={fill=white},mark size=0.4pt] (1,0.8)
node[right]{\scriptsize $n=2$};
\draw[gray] plot[mark=*, mark options={fill=gray},mark size=0.4pt] (1,0.7)
node[right]{\scriptsize $n=3$};
\draw plot[mark=*, mark options={fill=black},mark size=0.4pt] (1,0.6)
node[right]{\scriptsize $n=4$};
\end{scope}
\foreach \x/\xtext in {1/1, 0.5/\frac 12}
\draw[shift={(\x,0)}] (0pt, 1pt) -- (0pt,-1pt) node[below left] {\tiny $\xtext$};
\end{tikzpicture}
\end{minipage}
%\hspace{1cm}
\begin{minipage}{0.46\textwidth}
\begin{tikzpicture}[domain=-0.7:0.7,font=\sffamily, scale=2.3]
\draw[-stealth] (-1.5,0) -- (1.6,0) node[above] {$x$};
\draw[-stealth] (0,-1.6) -- (0,1.6) node[left] {$y$};
\draw plot [only marks, mark=*, mark options={fill=black},mark size=0.7pt]
coordinates{(-1.3263, -0.10184) (-1.3263, 
  0.10184) (-1.2952, -0.30314) (-1.2952, 
  0.30314) (-1.2338, -0.49734) (-1.2338, 
  0.49734) (-1.1434, -0.67987) (-1.1434, 
  0.67987) (-1.0262, -0.84647) (-1.0262, 
  0.84647) (-0.88493, -0.99322) (-0.88493, 
  0.99322) (-0.72291, -1.1167) (-0.72291, 
  1.1167) (-0.54395, -1.2140) (-0.54395, 
  1.2140) (-0.35222, -1.2828) (-0.35222, 
  1.2828) (-0.15223, -1.3215) (-0.15223, 
  1.3215) (0.051342, -1.3293) (0.051342, 
  1.3293) (0.25372, -1.3059) (0.25372, 
  1.3059) (0.45017, -1.2519) (0.45017, 
  1.2519) (0.63609, -1.1685) (0.63609, 
  1.1685) (0.80711, -1.0576) (0.80711, 
  1.0576) (0.95926, -0.92200) (0.95926, 
  0.92200) (1.0890, -0.76470) (1.0890, 
  0.76470) (1.1933, -0.58942) (1.1933, 
  0.58942) (1.2699, -0.40030) (1.2699, 
  0.40030) (1.3171, -0.20204) (1.3171, 0.20204)}; % n=40
\draw[gray] plot [only marks, mark=*, mark options={fill=gray},mark size=0.7pt]
coordinates{(-1.3208, -0.13435) (-1.3208, 
  0.13435) (-1.2667, -0.39754) (-1.2667, 
  0.39754) (-1.1607, -0.64445) (-1.1607, 
  0.64445) (-1.0072, -0.86497) (-1.0072, 
  0.86497) (-0.81240, -1.0501) (-0.81240, 
  1.0501) (-0.58433, -1.1921) (-0.58433, 
  1.1921) (-0.33230, -1.2854) (-0.33230, 
  1.2854) (-0.066633, -1.3260) (-0.066633, 
  1.3260) (0.20179, -1.3123) (0.20179, 
  1.3123) (0.46200, -1.2448) (0.46200, 
  1.2448) (0.70335, -1.1263) (0.70335, 
  1.1263) (0.91599, -0.96156) (0.91599, 
  0.96156) (1.0913, -0.75732) (1.0913, 
  0.75732) (1.2223, -0.52189) (1.2223, 
  0.52189) (1.3042, -0.26517) (1.3042, 0.26517)};  % n=30
\draw  plot[only marks, mark=*, mark options={fill=white},mark size=0.7pt]
coordinates{(-1.3041, -0.19665) (-1.3041, 
  0.19665) (-1.1881, -0.57248) (-1.1881, 
  0.57248) (-0.96649, -0.89742) (-0.96649, 
  0.89742) (-0.65885, -1.1426) (-0.65885, 
  1.1426) (-0.29255, -1.2861) (-0.29255, 
  1.2861) (0.099901, -1.3153) (0.099901, 
  1.3153) (0.48367, -1.2274) (0.48367, 
  1.2274) (0.82475, -1.0301) (0.82475, 
  1.0301) (1.0931, -0.74067) (1.0931, 
  0.74067) (1.2664, -0.38437) (1.2664, 0.38437)}; % n=20
\draw[black] plot[only marks, mark=*, mark options={fill=gray},mark size=0.7pt] 
coordinates{(-1.2007, -0.35180) (-1.2007, 
  0.35180) (-0.82147, -0.94486) (-0.82147, 
  0.94486) (-0.18140, -1.2407) (-0.18140, 
  1.2407) (0.51981, -1.1448) (0.51981, 
  1.1448) (1.0649, -0.68221) (1.0649, 0.68221)};  % n=10
\begin{scope}
\draw[black] plot[mark=*, mark options={fill=gray}, mark size=0.6pt] (1.3,1.2)
node[right]{\color{black}\scriptsize $n=1$};
\draw plot[mark=*, mark options={fill=white},mark size=0.65pt] (1.3,1.05)
node[right]{\scriptsize $n=2$};
\draw[gray] plot[mark=*, mark options={fill=gray},mark size=0.6pt] (1.3,0.9)
node[right]{\scriptsize $n=3$};
\draw plot[mark=*, mark options={fill=black},mark size=0.6pt] (1.3,0.75)
node[right]{\scriptsize $n=4$};
\end{scope}
\foreach \x/\xtext in {1/1, 1.333/\frac 43}
\draw[shift={(\x,0)}] (0pt, 1pt) -- (0pt,-1pt) node[below left] {\tiny $\xtext$};
\end{tikzpicture}
\end{minipage}
\caption{\label{fig2} Left: Zeros of $Q_{10n;1}(z;1/3)$ for $n=1, 2, 3, 4$ 
in the window $[-1,1.2]\times[-1,1]$, with $m=2/3$. 
Right: Zeros of $Q_{10n;1}(z;4/3)$ for $n=1,2,3,4$ in the window 
$[-1.5,1.5]\times[-1.5,1.5]$, with $m=2/3$.}
\end{center}
\end{figure} 
In order to show that Sendov's conjecture remains valid in Table 
\ref{tab3-4} we present the maximum distance between each zero of $Q_{n,1}(z,\xi)$ 
and the closet zero of $Q'_{n,1}(z,\xi)$ for different values of $n$. 
\begin{table}[!hbt]
\begin{minipage}{0.46\textwidth}
\begin{center}
\begin{tabular}{c|c|l}
$n$& zero& distance\\
\hline \hline 
2&$0.99163$&0.9440\\
3&$1.1388$&1.5309\\
4&$-0.97769$&1.5490\\
5&$1.1848$&1.7297\\
6&$-0.99320$&1.7641\\
7&$1.1711$&1.7968\\
8&$-0.99834$&1.8563\\
9&$-0.94557-0.32389i$&1.8579\\
10&$-1.0005$&1.9036\\
11&$-0.96404-0.26952i$&1.9012\\
12&$-1.0015$&1.9310\\
13&$-0.97483-0.23056i$&1.9275\\
14&$-1.0020$&1.9483\\
15&$-0.98163-0.20134i$&1.9446\\
16&$-1.0022$&1.9598\\
17&$-0.98617-0.17864i$&1.9563\\
18&$-1.0023$&1.9679\\
19&$-0.98934-0.16051i$&1.9647\\
20&$-1.0023$&1.9738
\end{tabular}
\end{center}
\end{minipage}
\hspace{10mm}
\begin{minipage}{0.46\textwidth}
\begin{center}
\begin{tabular}{c|c|l}
$n$& zero& distance\\
\hline \hline 
2&$-0.45238+0.38021i$&0.38021\\
3&$-0.24822+0.77992i$&1.2234\\
4&$0.13378+0.95290i$&1.6349\\
5&$0.44282+0.96439i$&1.7519\\
6&$-0.28585-1.04862i$&1.9307\\
7&$-0.81931-0.79845i$&2.0409\\
8&$-0.60642-1.03052i$&2.1514\\
9&$-0.99493-0.71892i$&2.2245\\
10&$-0.82147-0.94486i$&2.2841\\
11&$-1.09907-0.63389i$&2.3315\\
12&$-0.95942-0.84975i$&2.3671\\
13&$-1.16310-0.56021i$&2.3993\\
14&$-1.05017-0.76336i$&2.4222\\
15&$-1.20452-0.49917i$&2.4452\\
16&$-1.11214-0.68904i$&2.4609\\
17&$-1.23260-0.44888i$&2.4779\\
18&$-1.15604-0.62599i$&2.4892\\
19&$-1.25243-0.40715i$&2.5022\\
20&$-1.18814-0.57248i$&2.5107
\end{tabular}
\end{center}
\end{minipage}
\caption{\label{tab3-4}For every  $2\le n\le 20$, $m=2/3$, we obtain the zero of 
$Q_{n;1}(z;1/3)$, left, and $Q_{n;1}(z;4/3 )$, right, which produces the 
maximum distance with respect to the zeros of their corresponding derivatives.}
\end{table}

{\bf Third Example}
Let 
\[
d \mu_2(\theta)=|z-1|^2\dfrac{d\theta}{2\pi}.
\]
The monic associated orthogonal polynomials with respect to 
$\mu_2$ on $\mathbb T$ are \cite{MR1367960, MR2556830}
\begin{equation} \label{eq:21}
L_n(z)=\sum_{k=0}^{n} \dfrac{k+1}{n+1}z^k=\dfrac{(n+1)z^{n+2}-(n+2)z^{n+1}+1}{(n+1)(z-1)^2},\quad n=1,2, ...
\end{equation} 
From this expression their first order monic polar polynomials are defined as
\begin{equation} \label{eq:22}
Q_{n;1}(z;\xi)=\dfrac{z\left(z^{n+1}-1\right)(\xi-1)-\xi\left(\xi^{n+1}-1\right)(z-1)}{(\xi-1)(z-\xi)(z-1)},\quad z\ne 1, z\ne \xi.
\end{equation}

In Figure \ref{fig3} we show the zeros for the $k=1$ and $k=4$ polar polynomials. 
In Figure \ref{fig3} we show  the zeros of these polynomials under different 
settings (in the first case $|\xi_1|+2(1 +|\xi_1|)=3$ and
$|\xi_2|+5(1 +|\xi_2|)=13$ in the second one).

In the $k=4$ case, and due of the length and difficulty of these expressions for the polynomials,  are not presented in the manuscript.

\begin{figure}[!hbt]
\begin{minipage}{0.45\textwidth}
\begin{center}
\begin{tikzpicture}[domain=-1.2:1.2,font=\sffamily, scale=3]
\draw[-stealth] (-1.2,0) -- (1.3,0) node[above] {$x$};
\draw[-stealth] (0,-1.2) -- (0,1.2) node[left] {$y$};
\draw plot [only marks, mark=*, mark options={fill=black},mark size=0.6pt]
coordinates{(-1.0017, 0) (1.0498, 0) (-0.98971, -0.15464) (-0.98971, 
  0.15464) (-0.95395, -0.30558) (-0.95395, 
  0.30558) (-0.89530, -0.44920) (-0.89530, 
  0.44920) (-0.81515, -0.58206) (-0.81515, 
  0.58206) (-0.71541, -0.70098) (-0.71541, 
  0.70098) (-0.59844, -0.80309) (-0.59844, 
  0.80309) (-0.46704, -0.88593) (-0.46704, 
  0.88593) (-0.32431, -0.94749) (-0.32431, 
  0.94749) (-0.17366, -0.98627) (-0.17366, 
  0.98627) (-0.018663, -1.0013) (-0.018663, 
  1.0013) (0.13699, -0.99214) (0.13699, 
  0.99214) (0.28959, -0.95895) (0.28959, 
  0.95895) (0.43553, -0.90245) (0.43553, 
  0.90245) (0.57135, -0.82387) (0.57135, 
  0.82387) (0.69386, -0.72495) (0.69386, 
  0.72495) (0.80020, -0.60787) (0.80020, 
  0.60787) (0.88800, -0.47517) (0.88800, 
  0.47517) (0.95552, -0.32953) (0.95552, 
  0.32953) (1.0022, -0.17315) (1.0022, 0.17315)}; % n=40
\draw[gray] plot [only marks, mark=*, mark options={fill=gray},mark size=0.6pt]
coordinates{(-1.0020, 0) (1.0636, 0) (-0.98088, -0.20473) (-0.98088, 
  0.20473) (-0.91831, -0.40083) (-0.91831, 
  0.40083) (-0.81693, -0.58005) (-0.81693, 
  0.58005) (-0.68096, -0.73482) (-0.68096, 
  0.73482) (-0.51608, -0.85858) (-0.51608, 
  0.85858) (-0.32916, -0.94606) (-0.32916, 
  0.94606) (-0.12801, -0.99345) (-0.12801, 
  0.99345) (0.078993, -0.99862) (0.078993, 
  0.99862) (0.28320, -0.96116) (0.28320, 
  0.96116) (0.47612, -0.88237) (0.47612, 
  0.88237) (0.64975, -0.76521) (0.64975, 
  0.76521) (0.79702, -0.61408) (0.79702, 
  0.61408) (0.91222, -0.43448) (0.91222, 
  0.43448) (0.99196, -0.23196) (0.99196, 0.23196)};  % n=30
\draw  plot[only marks, mark=*, mark options={fill=white},mark size=0.6pt]
coordinates{(-1.0023, 0) (1.0883, 0) (-0.95568, -0.30195) (-0.95568, 
  0.30195) (-0.82009, -0.57592) (-0.82009, 
  0.57592) (-0.60792, -0.79643) (-0.60792, 
  0.79643) (-0.33851, -0.94283) (-0.33851, 
  0.94283) (-0.036502, -1.0011) (-0.036502, 
  1.0011) (0.27054, -0.96499) (0.27054, 
  0.96499) (0.55464, -0.83663) (0.55464, 
  0.83663) (0.79029, -0.62579) (0.79029, 
  0.62579) (0.95797, -0.34771) (0.95797, 0.34771)}; % n=20
\draw[black] plot[only marks, mark=*, mark options={fill=gray},mark size=0.6pt] 
coordinates{(-1.0005, 0) (1.1433, 0) (-0.82687, -0.56264) (-0.82687, 
  0.56264) (-0.36399, -0.93073) (-0.36399, 
  0.93073) (0.23348, -0.97212) (0.23348, 
  0.97212) (0.76712, -0.65585) (0.76712, 0.65585)};  % n=10
\begin{scope}
\draw[black] plot[mark=*, mark options={fill=gray}, mark size=0.4pt] (1,0.9)
node[right]{\color{black}\scriptsize $n=1$};
\draw plot[mark=*, mark options={fill=white},mark size=0.4pt] (1,0.8)
node[right]{\scriptsize $n=2$};
\draw[gray] plot[mark=*, mark options={fill=gray},mark size=0.4pt] (1,0.7)
node[right]{\scriptsize $n=3$};
\draw plot[mark=*, mark options={fill=black},mark size=0.4pt] (1,0.6)
node[right]{\scriptsize $n=4$};
\end{scope}
\foreach \x/\xtext in {1/1, 0.5/\frac 12}
\draw[shift={(\x,0)}] (0pt, 1pt) -- (0pt,-1pt) node[below left] {\tiny $\xtext$};
\end{tikzpicture}
\caption{\label{fig3} Zeros of $Q_{10n;1}(z;1/3)$ for $n=1, 2, 3, 4$ in the
window $[-1,1.2]\times[-1,1]$, with $m=2/3$.}
\end{center}
\end{minipage}
\hspace{1cm}
\begin{minipage}{0.46\textwidth}
\begin{center}
\begin{tikzpicture}[domain=-0.7:0.7,font=\sffamily, scale=2.3]
\draw[-stealth] (-1.5,0) -- (1.6,0) node[above] {$x$};
\draw[-stealth] (0,-1.6) -- (0,1.6) node[left] {$y$};
\draw plot [only marks, mark=*, mark options={fill=black},mark size=0.7pt]
coordinates{(-1.3263, -0.10184) (-1.3263, 
  0.10184) (-1.2952, -0.30314) (-1.2952, 
  0.30314) (-1.2338, -0.49734) (-1.2338, 
  0.49734) (-1.1434, -0.67987) (-1.1434, 
  0.67987) (-1.0262, -0.84647) (-1.0262, 
  0.84647) (-0.88493, -0.99322) (-0.88493, 
  0.99322) (-0.72291, -1.1167) (-0.72291, 
  1.1167) (-0.54395, -1.2140) (-0.54395, 
  1.2140) (-0.35222, -1.2828) (-0.35222, 
  1.2828) (-0.15223, -1.3215) (-0.15223, 
  1.3215) (0.051342, -1.3293) (0.051342, 
  1.3293) (0.25372, -1.3059) (0.25372, 
  1.3059) (0.45017, -1.2519) (0.45017, 
  1.2519) (0.63609, -1.1685) (0.63609, 
  1.1685) (0.80711, -1.0576) (0.80711, 
  1.0576) (0.95926, -0.92200) (0.95926, 
  0.92200) (1.0890, -0.76470) (1.0890, 
  0.76470) (1.1933, -0.58942) (1.1933, 
  0.58942) (1.2699, -0.40030) (1.2699, 
  0.40030) (1.3171, -0.20204) (1.3171, 0.20204)}; % n=40
\draw[gray] plot [only marks, mark=*, mark options={fill=gray},mark size=0.7pt]
coordinates{(-1.3208, -0.13435) (-1.3208, 
  0.13435) (-1.2667, -0.39754) (-1.2667, 
  0.39754) (-1.1607, -0.64445) (-1.1607, 
  0.64445) (-1.0072, -0.86497) (-1.0072, 
  0.86497) (-0.81240, -1.0501) (-0.81240, 
  1.0501) (-0.58433, -1.1921) (-0.58433, 
  1.1921) (-0.33230, -1.2854) (-0.33230, 
  1.2854) (-0.066633, -1.3260) (-0.066633, 
  1.3260) (0.20179, -1.3123) (0.20179, 
  1.3123) (0.46200, -1.2448) (0.46200, 
  1.2448) (0.70335, -1.1263) (0.70335, 
  1.1263) (0.91599, -0.96156) (0.91599, 
  0.96156) (1.0913, -0.75732) (1.0913, 
  0.75732) (1.2223, -0.52189) (1.2223, 
  0.52189) (1.3042, -0.26517) (1.3042, 0.26517)};  % n=30
\draw  plot[only marks, mark=*, mark options={fill=white},mark size=0.7pt]
coordinates{(-1.3041, -0.19665) (-1.3041, 
  0.19665) (-1.1881, -0.57248) (-1.1881, 
  0.57248) (-0.96649, -0.89742) (-0.96649, 
  0.89742) (-0.65885, -1.1426) (-0.65885, 
  1.1426) (-0.29255, -1.2861) (-0.29255, 
  1.2861) (0.099901, -1.3153) (0.099901, 
  1.3153) (0.48367, -1.2274) (0.48367, 
  1.2274) (0.82475, -1.0301) (0.82475, 
  1.0301) (1.0931, -0.74067) (1.0931, 
  0.74067) (1.2664, -0.38437) (1.2664, 0.38437)}; % n=20
\draw[black] plot[only marks, mark=*, mark options={fill=gray},mark size=0.7pt] 
coordinates{(-1.2007, -0.35180) (-1.2007, 
  0.35180) (-0.82147, -0.94486) (-0.82147, 
  0.94486) (-0.18140, -1.2407) (-0.18140, 
  1.2407) (0.51981, -1.1448) (0.51981, 
  1.1448) (1.0649, -0.68221) (1.0649, 0.68221)};  % n=10
\begin{scope}
\draw[black] plot[mark=*, mark options={fill=gray}, mark size=0.6pt] (1.3,1.2)
node[right]{\color{black}\scriptsize $n=1$};
\draw plot[mark=*, mark options={fill=white},mark size=0.65pt] (1.3,1.05)
node[right]{\scriptsize $n=2$};
\draw[gray] plot[mark=*, mark options={fill=gray},mark size=0.6pt] (1.3,0.9)
node[right]{\scriptsize $n=3$};
\draw plot[mark=*, mark options={fill=black},mark size=0.6pt] (1.3,0.75)
node[right]{\scriptsize $n=4$};
\end{scope}
\foreach \x/\xtext in {1/1, 1.333/\frac 43}
\draw[shift={(\x,0)}] (0pt, 1pt) -- (0pt,-1pt) node[below left] {\tiny $\xtext$};
\end{tikzpicture}
\caption{\label{fig4} Zeros of $Q_{10n;1}(z;4/3)$ for $n=1,2,3,4$ in the
window $[-1.5,1.5]\times[-1.5,1.5]$, with $m=2/3$.}
\end{center}
\end{minipage}
\end{figure}

\section*{Acknowledgements}
%We are very grateful to experts for their appropriate and constructive suggestions to improve this template.
 The work of the author R.S.C.-S.~was partially supported by Direcci\'on
General de Investigaci\'on Cient\'ifica y T\'ecnica, Ministerio de Econom\'ia
y Competitividad of Spain, under grant MTM2015-65888-C4-2-P.

%\nocite{*}
\bibliographystyle{plain}
%\bibliography{bibliography}

%%\bibliography{../../RCostas-references.20240727.bib}

\end{document}